\theoremstyle{plain}
\newtheorem*{thm*}{Theorem}
\newtheorem{thm}{Theorem}
\Crefname{thm}{Theorem}{Theorems}
\newtheorem*{lem*}{Lemma}
\newtheorem{lem}[thm]{Lemma}
\Crefname{lem}{Lemma}{Lemmas}
\newtheorem*{claim*}{Claim}
\crefname{claim}{Claim}{Claims}
\Crefname{claim}{Claim}{Claims}
\Crefname{prop}{Proposition}{Propositions}
\crefname{cor}{Corollary}{Corollaries}
\crefname{conj}{Conjecture}{Conjectures}
\Crefname{qn}{Question}{Questions}
\Crefname{obs}{Observation}{Observations}
\Crefname{ex}{Example}{Examples}
\theoremstyle{definition}
\Crefname{prob}{Problem}{Problems}
\Crefname{defn}{Definition}{Definitions}
\theoremstyle{remark}
\renewenvironment{proof}[1][]{\begin{trivlist}
\item[\hspace{\labelsep}{\bf\noindent Proof#1.\/}] }{\qed\end{trivlist}}
\newcommand{\remove}[1]{}
\newcommand{\floor}[1]{
    \lfloor #1 \rfloor
}
\newcommand{\eps}{\varepsilon}
\begin{document}


\title{Counting odd cycle free orientations of graphs}
\date{}
\author{
Matija Buci\'c\thanks{Department of Mathematics, ETH, Z\"urich, Switzerland. Email: \href{mailto:matija.bucic@math.ethz.ch} {\nolinkurl{matija.bucic@math.ethz.ch}}.}
\and
Benny Sudakov\thanks{Department of Mathematics, ETH, Z\"urich, Switzerland. Email:
\href{mailto:benjamin.sudakov@math.ethz.ch} {\nolinkurl{benjamin.sudakov@math.ethz.ch}}.
Research supported in part by SNSF grant 200021-175573.}
}

\maketitle

\section{Introduction}

Given a fixed graph $H$, over all $n$-vertex graphs $G$ what is the maximum number of $2$-edge colourings of $G$ which contain no monochromatic copy of $H$? We denote the answer by $F(n,H)$. This very natural question was first asked by Erd\H{o}s and Rothschild \cite{E-R} in 1984 for the special case of $H=K_3$. This case was resolved by Yuster \cite{yuster} for large $n$ who in turn raised the problem of determining $F(n,K_k)$. This problem, again for large $n$, was solved by Alon, Balogh, Keevash and Sudakov \cite{ABKS} who in addition solve it for $H$ being any edge-colour critical graph (defined as graphs in which removal of some edge results in decrease in the chromatic number). This question has attracted a lot of attention over the years and has been generalised in a number of ways, we point the interested reader to the numerous papers citing \cite{ABKS}.

In 2006 Alon and Yuster \cite{alon-yuster} raised a closely related problem of maximising, over all $n$-vertex graphs $G$, the number of orientations of $G$ which contain no copy of some fixed tournament $T$. We denote the answer by $D(n,T)$. An immediate lower bound comes from taking $G$ to be a $K_{k}$-free graph with maximum number of edges. Since any orientation of such a $G$ is free of any tournaments on $k$ vertices this gives $D(n,T) \ge 2^{|E(G)|}=2^{t_{k-1}(n)}$ for any $k$-vertex tournament $T$, where $t_{k-1}(n)$ denotes the Tur\'an number. Alon and Yuster \cite{alon-yuster} show that, for large $n$, this easy lower is in fact the answer. Their general argument, which follows the approach used in \cite{ABKS}, relies on a regularity lemma and hence results in a requirement for $n$ to be extremely large. For the special case of $3$ vertex tournaments they give a different approach which solves the problem for the transitive tournament on $3$ vertices for all $n$ and only requires $n$ to be larger than about $10^4$ for $C_3$. Recently Araujo, Botler and Mota \cite{mota} determine the answer for $C_3$ for all values of $n$.

Araujo, Botler and Mota \cite{mota} raise a very natural question of what happens if instead of tournaments we are interested in $D(n,H)$ for an arbitrary oriented graph. In particular, they single out the question of what happens if $H$ is a strongly connected directed cycle $C_{k}$, even if we are only interested in the case of large $n$. In this short remark we answer their question for odd cycles. 

\begin{thm}\label{thm:main}
    For any $k \ge 1$ there exists $n_0=n_0(k)$ such that if $n \ge n_0$ $$D(n,C_{2k+1})=2^{\floor{n^2/4}}.$$
\end{thm}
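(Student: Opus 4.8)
The plan is to prove the matching lower and upper bounds separately; the lower bound is immediate, while the upper bound rests on a counting argument over orientations that localises to the edges lying inside the parts of a maximum cut.

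For the lower bound, I would take $G$ to be the complete balanced bipartite graph $K_{\floor{n/2},\ceil{n/2}}$, which has exactly $\floor{n^2/4}$ edges. Since $G$ is bipartite it contains no odd cycle at all, so \emph{every} one of its $2^{\floor{n^2/4}}$ orientations is automatically free of a directed $C_{2k+1}$, giving $D(n,C_{2k+1}) \ge 2^{\floor{n^2/4}}$.

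For the upper bound I must show $D(G) \le 2^{\floor{n^2/4}}$ for every $n$-vertex graph $G$, writing $D(G)$ for its number of $C_{2k+1}$-free orientations. If $e(G) \le \floor{n^2/4}$ this is trivial, as $D(G)$ is at most the total number $2^{e(G)}$ of orientations. So I may assume $e(G) > \floor{n^2/4}$; I fix a maximum cut $(A,B)$ of $G$ and write $e_{AB}$ for the number of crossing edges and $E_{\mathrm{in}}$ for the set of edges inside $A$ or inside $B$. Since $e_{AB} \le |A|\,|B| \le \floor{n^2/4}$, it suffices to establish the sharper bound $D(G) \le 2^{e_{AB}}$. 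Viewing a $C_{2k+1}$-free orientation as a choice of crossing orientation $\sigma$ together with an admissible completion on $E_{\mathrm{in}}$, and letting $c(\sigma)$ denote the number of such completions, this amounts to proving $\mathbb{E}_\sigma\!\left[c(\sigma)\right] \le 1$ for a uniformly random $\sigma$.

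The heart of the matter is a local analysis of one inside edge $uv \in E_{\mathrm{in}}$, say with $u,v\in A$. Given $\sigma$, orienting $uv$ can only create a directed $C_{2k+1}$ through a directed path of length $2k$ between $u$ and $v$ using crossing edges; such a path alternates between $A$ and $B$ and so returns to $A$ after $2k$ steps. When $u$ and $v$ have large crossing-degree I would exhibit $\Omega(n)$ pairwise edge-disjoint $u$--$v$ paths of length $2k$ among the crossing edges, making the events ``this path is directed from $u$ to $v$'' independent with probability $2^{-2k}$ each. Hence, conditioned on $\sigma$, with probability $1-(1-2^{-2k})^{\Omega(n)}$ there is a directed $u$-to-$v$ crossing path, and likewise a directed $v$-to-$u$ one; when both occur \emph{neither} orientation of $uv$ extends to a $C_{2k+1}$-free orientation. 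For a single inside edge this yields $\mathbb{E}_\sigma[c(\sigma)] = 1 + \Pr[\text{no directed crossing path either way}] - \Pr[\text{directed crossing paths both ways}] \le 1$, since the second probability dominates the first once $\Omega(n)$ edge-disjoint paths are available.

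The main obstacle will be promoting this per-edge statement to the simultaneous bound $\mathbb{E}_\sigma[c(\sigma)] \le 1$ over all of $E_{\mathrm{in}}$, because the forcing events for distinct inside edges are not independent (they read the same crossing edges), and because an inside edge may have low-degree endpoints for which the required edge-disjoint paths do not exist. I expect to split into two regimes. If $G$ is close to bipartite — only $o(n^2)$ inside edges, incident to high-degree vertices — I would reveal the inside edges one at a time, keeping each conditional expected factor at most $1$. If instead $G$ is far from bipartite, a supersaturation argument should produce so many directed $C_{2k+1}$'s in a typical orientation that $D(G)$ falls well below $2^{\floor{n^2/4}}$; this is precisely where the hypothesis that $n$ is large is needed. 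Reconciling the two regimes, and in particular the tight boundary case $e_{AB}=\floor{n^2/4}$ where the inequality $e_{AB}\le\floor{n^2/4}$ leaves no slack, is the crux of the argument.
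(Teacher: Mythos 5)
Your lower bound and your local mechanism are exactly right: the observation that an inside edge $uv$ is killed in \emph{both} directions once the crossing orientation contains directed $u$--$v$ paths of length $2k$ both ways is precisely the paper's embedding lemma (\Cref{lem:embedding}) and drives its case analysis. But the two steps you explicitly leave open are genuine gaps, and neither closes the way you suggest. First, in the near-bipartite regime, revealing inside edges one at a time and keeping ``each conditional expected factor at most $1$'' fails because the factors are positively correlated. Concretely, take $k=1$ and $G$ the Tur\'an graph plus a triangle $u,v,w$ inside part $A$. For a common neighbour $b\in B$, both orientations of $uv$ are safe exactly when the edges $ub,vb$ are oriented ``in parallel'' (both towards $b$ or both away from it). Hence, conditioned on both orientations of $uv$ and of $uw$ being safe, the edges at $v$ and at $w$ are parallel to each other at every $b\in B$, so no directed $2$-path joins $v$ and $w$ and the conditional expected factor of $vw$ is exactly $2$, not at most $1$. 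The inequality $\mathbb{E}_\sigma[c(\sigma)]\le 1$ can only be rescued by charging the exponentially small probability of such degenerate crossing orientations against the accumulated factors of $2$ --- a global entropy trade-off. The paper does this bookkeeping differently and more robustly: it discards once and for all the ``irrelevant'' orientations (those containing linear-sized sets $X_1,X_2$ with fewer than a $1/10$ fraction of edges in some direction), shows these number at most $2^{\floor{n^2/4}}/2$, and then runs a vertex-deletion induction (a non-Tur\'an $G$ with at least $2^{\floor{n^2/4}+m}$ good orientations has a vertex, or a pair of vertices, whose deletion leaves at least $2^{\floor{(n-1)^2/4}+m+1}$ of them), which also disposes of your worry about the zero-slack boundary case $e_{AB}=\floor{n^2/4}$.

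Second, the far-from-bipartite regime cannot be handled by supersaturation in the form you propose. The natural quantitative version --- $t$ edge-disjoint copies of $C_{2k+1}$ give $D(G)\le 2^{e(G)}\left(1-2^{-2k}\right)^{t}$ --- is provably too weak: in the Tur\'an graph plus a clique contributing $j$ inside edges, every copy of $C_{2k+1}$ uses at least one inside edge (crossing edges form a bipartite graph), so $t\le j$, and the resulting bound is at least $2^{\floor{n^2/4}}\bigl(2(1-2^{-2k})\bigr)^{j}\gg 2^{\floor{n^2/4}}$; each extra edge buys a factor $2$ that a per-copy factor $(1-2^{-2k})$ can never repay. (Moreover, ``a typical orientation contains many directed copies'' does not by itself bound the number of orientations avoiding all copies.) What is actually needed here is a stability statement for \emph{orientations}: if $G$ has at least $2^{\floor{n^2/4}}$ $C_{2k+1}$-free orientations, then $G$ is $\delta n^2$-close to bipartite. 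This is the paper's \Cref{lem:stability}, proved with the directed regularity lemma by counting, for each fixed partition and cluster graph, how many orientations are compatible with it, and then applying Simonovits' stability to the graph of bidirected cluster edges. That lemma is the real engine of the upper bound, and your outline has no substitute for it; everything after it in the paper (min-degree bound, relevant orientations, Cases 1 and 2) is the rigorous version of your per-edge analysis.
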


In fact, our argument applies for any $H$ which is an orientation of an edge-colour critical graph. Since our argument follows closely the ideas of both \cite{ABKS,alon-yuster} 
we will only give a short sketch and only for the case of odd cycles. Another benefit of this approach is that we are able to present the key ideas behind all of these arguments without burying them under the details as tends to happen when making regularity based proofs formal.

\section{Proof sketch}
We refer the reader to \cite{alon-yuster} for how to fill in various details and computations since their results are in the same setting as we are working in. We will assume some familiarity with the basic directed regularity lemma, the specific details needed are given in Section 2 of \cite{alon-yuster}.

The following lemma says that if there are many orientations of $G$ which are $C_{2k+1}$-free then $G$ is not far from being bipartite. It is analogous to Lemma 2.1 in \cite{alon-yuster} which replaces $C_{2k+1}$ with an arbitrary tournament (and adjusts the numbers accordingly). 

\begin{lem}\label{lem:stability}
    Let $k \ge 1$ and $\delta>0$ there exists $n_0=n_0(\delta,k)$ such that if $G$ is a graph of order $n \ge n_0$ which has at least $2^{\floor{n^2/4}}$ distinct $C_{2k+1}$-free orientations then there is a bipartition of $V(G)$ with at most $\delta n^2$ edges inside parts.
\end{lem}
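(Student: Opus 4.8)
The plan is to prove a stability-type statement via directed Szemerédi regularity, following the template of Lemma 2.1 in \cite{alon-yuster} but replacing the tournament with the odd cycle $C_{2k+1}$. The key point to exploit is that $C_{2k+1}$ is an orientation of an odd cycle, which is the canonical edge-colour-critical graph; removing any edge makes it bipartite (chromatic number drops from $3$ to $2$). So the heuristic is: counting $C_{2k+1}$-free orientations is, up to lower-order factors, like counting $2$-colourings with no monochromatic odd structure, and the extremal configuration is forced to be near-bipartite.

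Here is how I would carry it out. First I would apply the directed regularity lemma to the digraph structure — more precisely, a counting/embedding argument showing that if a regularity partition of $G$ has a ``dense enough'' reduced structure containing an odd closed walk (equivalently, if the reduced graph is not bipartite and the densities are bounded below), then almost every orientation of $G$ contains a copy of $C_{2k+1}$. The embedding lemma in the directed setting says that within a collection of pairwise $\eps$-regular pairs of density at least $d$, and for a typical orientation, one can greedily route a directed path and close it into a directed cycle of any prescribed odd length, because regularity guarantees that a positive proportion of vertices in each part send edges in the required orientation to the next part. Thus a ``bad'' (non-near-bipartite) regular partition kills a $1-o(1)$ fraction of orientations, giving a count far below $2^{\floor{n^2/4}}$.

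The counting backbone is then a union bound over regular partitions. Fix $\eps \ll \delta$ and let $M=M(\eps)$ bound the number of parts. Each $C_{2k+1}$-free orientation $G$ admits an $\eps$-regular partition; there are at most $M^n$ (a constant power, absorbed later) ways to assign vertices to parts, so it suffices to bound, for each fixed partition, the number of $C_{2k+1}$-free orientations consistent with it. I would split the edges into three classes: edges inside parts, edges in irregular or sparse pairs, and edges in dense regular pairs. The first two classes are few in number (at most $\eps n^2$ plus $O(n^2/M)$ edges, each contributing a factor of at most $2$), so they contribute only $2^{o(n^2)}$ to the count. For the dense regular pairs, the embedding lemma forces the reduced multigraph on these pairs to be essentially bipartite — otherwise an odd cycle is almost surely created — and the number of edges spanned by a bipartite reduced structure on $n$ vertices is at most $\floor{n^2/4}$. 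Counting orientations as $2^{(\text{number of edges})}$ and comparing against the hypothesised $2^{\floor{n^2/4}}$ then forces the number of edges inside parts to be at most $\delta n^2$, i.e. $V(G)$ is $\delta$-close to bipartite. Making the arithmetic balance is where choosing $\eps$, $d$, and $M$ in the right order matters, and $n_0(\delta,k)$ emerges from the regularity lemma's tower-type bound.

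The main obstacle, and the one genuinely new ingredient relative to \cite{alon-yuster}, is the directed embedding lemma for \emph{odd cycles of a fixed length} rather than for a fixed tournament. For a tournament one embeds vertex by vertex and every required edge orientation appears with the regularity-guaranteed density; for $C_{2k+1}$ one must close up a directed path into a \emph{consistently oriented} cycle of the exact odd length $2k+1$, so one has to verify that the greedy routing can return to the starting part along edges oriented the right way and that the closing edge exists in a positive fraction of orientations. Because $C_{2k+1}$ is edge-colour critical (odd cycle), this should go through by the same mechanism that makes odd cycles unavoidable in dense non-bipartite reduced graphs: a non-bipartite reduced graph contains a closed odd walk, which can be shortened or padded to length $2k+1$ using the super-regularity and density of the pairs, and a random orientation realises the cyclic orientation with probability bounded below. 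I expect the cleanest route is to first establish that a non-bipartite dense regular structure yields \emph{some} directed odd cycle, then use the flexibility of path lengths in regular pairs (inserting detours through a dense pair adds $2$ to the length) to hit exactly $2k+1$; pinning down this length-adjustment and the probability lower bound is the crux, and everything else is the standard \cite{ABKS,alon-yuster} counting scaffold.
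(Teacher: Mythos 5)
Your counting scaffold (directed regularity, union bound over the constantly many partitions and cluster structures, cheap entropy for edges inside parts and in irregular or sparse pairs) matches the paper's, but your treatment of the dense regular pairs has a genuine gap, and it sits exactly where the real work is. Your key lemma --- ``if the reduced graph of dense pairs is non-bipartite then almost every orientation of $G$ contains $C_{2k+1}$'' --- is true but quantitatively useless: ``almost every'' here means all but a $2^{-cn^2}$ fraction of \emph{all} $2^{e(G)}$ orientations, where $c$ depends on the regularity parameters and is tiny, so it only bounds the number of $C_{2k+1}$-free orientations by $2^{e(G)-cn^2}$. Since $e(G)$ can exceed $n^2/4$ by a constant factor (take $G$ complete tripartite, $e(G)=n^2/3$), this bound lies far above the hypothesised $2^{\floor{n^2/4}}$ and produces no contradiction. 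Typicality is the wrong tool because the $C_{2k+1}$-free orientations are precisely the atypical ones; one must \emph{count} the atypical orientations. Concretely, for each orientation one classifies each dense pair of its directed-regular partition as bidirected (dense in both directions) or one-way; a one-way pair with $e'$ edges admits only $\binom{e'}{\le e'/10}$-many orientations of those edges, an exponential saving over $2^{e'}$, and this saving --- not typicality --- is where the count drops. The pairs that get full entropy $2^{e'}$ are the bidirected ones, and it is \emph{their} cluster graph that must be triangle-free, so Mantel caps the dominant term at $2^{(m^2/4)(n/m)^2}=2^{n^2/4}$ and the hypothesis then forces roughly $m^2/4$ bidirected pairs.

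Relatedly, your identification of the forbidden reduced structure is off, and this breaks both the embedding step and the final structural step. In the directed setting, ``non-bipartite reduced graph of dense pairs'' is not equivalent to ``directed odd closed walk'': the transitive orientation of the complete tripartite graph ($A\to B$, $B\to C$, $A\to C$) has a triangle as its reduced dense structure yet is acyclic, so no embedding lemma can extract any directed cycle from it. The correct forbidden configuration, which the paper isolates, is a triangle with two bidirected edges and one directed edge; this does give a directed closed walk of odd length, which can be stretched to exactly $2k+1$ (via the partition-refinement/padding device you correctly anticipated as the length-adjustment issue). Finally, even granting Simonovits stability for the bidirected cluster graph, your proof is not finished: one-way dense pairs carry genuine edges of $G$, up to $(n/m)^2$ each, and a priori many of them could lie inside the parts $W_1,W_2$ of that bipartition, destroying the conclusion that only $\delta n^2$ edges lie inside parts. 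The paper closes this with a separate argument: if there were many one-way cluster edges, most would lie inside a part, and adding them to the bidirected edges across the parts pushes the edge count past $m^2/4$, so Mantel produces the forbidden two-bidirected-plus-one-directed triangle. Your proposal has no mechanism playing this role, and without it the lemma's conclusion does not follow from near-bipartiteness of the bidirected pairs alone.
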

    
\begin{proof}[ (sketch)]
Let $\overrightarrow{G}$ be a $C_{2k+1}$-free orientation of $G$. We apply directed regularity lemma to $\overrightarrow{G}$ to obtain an $\eps$-regular partition $V(\overrightarrow{G})=V_1 \cup \ldots \cup V_m$ (all $V_i$'s should have sizes as equal as possible, and all but $\eps m^2$ pairs $(V_i, V_j)$ should satisfy that linear sized subsets have about the same density of edges in both directions as the density between $V_i,V_j$). We then consider a cluster graph of density $\eta$ (so vertices being parts of our partition and two parts joined by a directed edge if they are $\eps$-regular and the density of edges in the corresponding direction is at least $\eta$).

We first want to show that there must exist some orientation $\overrightarrow{G}$ for which the resulting cluster graph has at least $m^2/4-\beta m^2$ edges directed both ways. If this is not the case there would be too few (less than $2^{n^2/4}$) orientations possible. Indeed, we will fix a partition $\mathcal{P}$ and a cluster graph $C$ and count how many orientations could result with this partition and the cluster graph. Since there are relatively few (recall that regularity lemma gives us a constant number of parts) possible partitions and cluster graphs and every orientation gives rise to some partition and cluster graph this will result in too few orientations in total. There are few edges inside parts of our fixed $\mathcal{P}$ and between non $\eps$-regular pairs (at most $\eps n^2$ in both cases) and each edge may be oriented in two ways so total contribution of such edges is at most a factor of $2^{2\eps n^2}$ to the number of orientations. Similarly, for any $\eps$-regular pair $(V_i,V_j)$ which is not an edge of $C$ in some direction there must be at most about $\eta n^2/m^2$ directed edges, so a large proportion of the edges are oriented the same way. An easy estimate tells us that edges can be oriented this way in at most $2^{c_{\eta}n^2/m^2}$ many ways where $c_{\eta}$ is a small constant depending on $\eta$. Since there are at most $m^2$ such pairs, orienting edges between them contributes at most a factor of $2^{c_{\eta}n^2}$ to the total number of orientations. Finally, for any edge of $C$ there are at most $2^{(n/m)^2}$ orientations of edges between the corresponding pair, but since we are assuming $C$ has at most $m^2/4-\beta m^2$ edges they contribute at most a factor of $2^{n^2/4-\beta n^2}$ to the total number of orientations. Choosing $\eta$ to be small enough compared to $\beta$ gives us a contradiction to having at least $2^{\floor{n^2/4}}$ orientations.

Let now $\overrightarrow{G}$ be an orientation for which the resulting cluster graph $C$ has at least $m^2/4-\beta m^2$ edges directed both ways. We claim $C$ can not contain a bidirected triangle missing only a single directed edge as otherwise $\overrightarrow{G}$ would contain a $C_{2k+1}$. This is a consequence of a standard embedding lemma along the lines of Lemma 2.5 of \cite{alon-yuster} and can be deduced from it by refining the partition (splitting each part into $k$ parts, while preserving the regularity and density with somewhat worse constants) and then using their lemma to embed $C_{2k+1}$ one vertex per new part.

In particular, this tells us that the graph consisting only of bidirected edges of $C$ is both triangle free and has at least $m^2/4-\beta m^2$ edges. The stability theorem of Simonovits \cite{simonovits-stability} tells us that there is a bipartition of $V(C)=W_1 \cup W_2$ with at most $\alpha m^2$ bidirected edges within a part (for any $\alpha$, provided $\alpha \gg \beta$). If we consider a bipartite subgraph consisting of bidirected edges of $C$ between $W_1$ and $W_2$ it has at least $m^2/4-(\beta+\alpha) m^2$ edges. In particular, if $C$ had in addition $9(\alpha+\beta)m^2$ directed edges we would find a bidirected triangle with one directed edge removed in $C$. This follows since at least $8(\alpha+\beta)m^2$ of these additional edges must be inside parts so at least $4(\alpha+\beta)m^2$ inside a single part, say $W_1$, and we can pass to a bipartite subgraph of size $2(\alpha+\beta)m^2$ within $W_1$. Taking into account these edges might also be bidirected there are $(\alpha+\beta)m^2$ distinct pairs spanning a directed edge. These edges together with the bidirected edges across make a subgraph with at least $m^2/4$ edges so by Mantel's theorem make a triangle. This triangle needs to have at most one vertex inside the part (since edges inside the part make a bipartite graph) so they make a desired triangle in $C$.

This tells us that after removing edges within $V_i$'s, between non-$\eps$-regular pairs, between pairs having density less than $\eta$ in some direction (since we know there are at most $9(\alpha+\beta)m^2$ such pairs) and edges inside $W_1$ or $W_2$ above (at most $\alpha m^2$ such pairs) we removed at most a small constant proportion of edges and are left with a bipartite graph, as desired. 
\end{proof}

The following lemma replaces the embedding Lemma 3.1 of \cite{alon-yuster}. Let us introduce some notation for convenience.
Given a directed graph $G$ and an integer $k$ we say a pair of disjoint subsets $W_1,W_2\subseteq V(G), |W_i|\ge 2k$ are $k$-regular if for any $X_i \subseteq W_i, |X_i|\ge |W_i|/20$ for $i=1,2$ we have at least $1/10$ proportion of edges of $G$ directed from $X_1$ to $X_2$ as well as from $X_2$ to $X_1.$

\begin{lem}\label{lem:embedding}
    Let $G$ be a directed graph and $W_1,W_2\subseteq V(G)$ a $k$-regular pair. Then one can find a directed path of length $2k$ starting in either of $W_i$.
\end{lem}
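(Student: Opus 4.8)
The plan is to build the path greedily, one vertex at a time, alternating between $W_1$ and $W_2$ and always following out-edges, while maintaining enough room to continue. Write $n_i=|W_i|\ge 2k$. The engine of the argument is the following \emph{defect} consequence of $k$-regularity: for any $Y\subseteq W_2$ with $|Y|\ge n_2/20$, all but fewer than $n_1/20$ vertices $x\in W_1$ satisfy $|N^+(x)\cap Y|\ge |Y|/10$, and symmetrically with the roles of $W_1,W_2$ and the direction of the edges reversed (the hypothesis supplies density at least $1/10$ \emph{both} ways, so both versions are available). To see this, let $B\subseteq W_1$ be the set of vertices violating the bound; if $|B|\ge n_1/20$ then $k$-regularity applied to the pair $(B,Y)$ forces at least $|B||Y|/10$ edges directed from $B$ to $Y$, contradicting that each vertex of $B$ sends strictly fewer than $|Y|/10$ such edges. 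Call a vertex \emph{good for $Y$} if it meets this bound.

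Assume we aim to start in $W_1$; the other case is identical by the symmetry of the $k$-regularity condition. I would construct $v_0,v_1,\dots,v_{2k}$ with $v_{2j}\in W_1$, $v_{2j+1}\in W_2$, consecutive vertices joined by an edge $v_i\to v_{i+1}$. When about to append $v_{i+1}$, let $Z$ denote the currently available target in the part $v_{i+1}$ will occupy (that part minus the already-used vertices) and let $Y$ denote the available target in the opposite part into which $v_{i+2}$ must fall. I choose $v_{i+1}$ to be simultaneously an out-neighbour of $v_i$, outside the used set, and good for $Y$; this preserves the invariant that the current endpoint has many available out-neighbours for the next step. Indeed, having placed $v_i$ as a vertex good for $Z$, it has at least $|Z|/10$ available out-neighbours, and among these we must merely avoid the fewer than $n/20$ vertices that fail to be good for $Y$; a counting check then leaves a valid choice. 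The final vertex $v_{2k}$ only needs to be an available out-neighbour of $v_{2k-1}$.

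The bookkeeping rests on the fact that the path occupies at most $k+1$ vertices of $W_1$ and $k$ of $W_2$, so at every step the available target has size at least $n_i-(k+1)$, which is at least $n_i/20$ because $n_i\ge 2k$; this is exactly what is needed to invoke the defect statement throughout. The main obstacle, and the reason the constants $1/10$ and $1/20$ together with the bound $|W_i|\ge 2k$ are calibrated as they are, is the very end of the path: there the available sets are smallest and the relevant lower bound, of the shape (available out-neighbours of $v_i$) minus (vertices bad for the next target), namely $|Z|/10-n/20$, is tightest. One therefore has to track these counts using the strict inequalities above (and treat the smallest values of $n_i$ with care) to guarantee that an extension always exists. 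Once this is verified at every step, the resulting alternating sequence is the desired directed path of length $2k$.
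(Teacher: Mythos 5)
Your proof is correct and takes essentially the same route as the paper: a greedy, vertex-by-vertex construction of the alternating path, where the $k$-regularity condition applied to a large set on one side against the unused portion of the other guarantees that each new endpoint has at least $(|W_i|-k)/10\ge |W_i|/20$ available out-neighbours (this is exactly where $|W_i|\ge 2k$ enters, matching your bookkeeping). The only difference is presentational: you package the counting as a defect statement (fewer than $|W_i|/20$ vertices are bad for any fixed large target) and choose good out-neighbours, whereas the paper performs the same averaging on the fly, picking the next vertex inside the current out-neighbour set so that it again has many available out-neighbours.
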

    
\begin{proof}
    We iteratively find our directed path. Assume that at stage $2i-1$ we found a path $v_1,\ldots, v_{2i-1}$ and a subset $V_{2i-1}\subseteq W_2 \setminus \{v_2,v_4,\ldots,v_{2i-2}\}$ of at least $|W_2|/20$ out-neighbours of $v_{2i-1}$. Then since there is a 1/10 proportion of edges oriented from $V_{2i-1}$ to $W_1 \setminus \{v_1,v_3,\ldots,v_{2i-1}\}$ there must be a vertex $v_{2i}$ in $V_{2i}$ with a set $V_{2i}$ of at least $(|W_1|-k)/10\ge |W_1|/20$ out-neighbours in $W_1 \setminus\{v_1,v_3,\ldots,v_{2i-1}\}$. Repeating from the other side completes the iteration. After $2k$ iterations we find the desired path. 
\end{proof}

We now turn to the proof of our main result.

\begin{proof}[ of \Cref{thm:main}]
    Let $n_0$ be given by \Cref{lem:stability} with sufficiently small $\delta$.
    
    Let us take a graph $G$ on $n>n_0^2+n_0$ vertices which has at least $2^{\floor{n^2/4}+m}$ $C_{2k+1}$-free orientations for some $m \ge 0$. We will show that if $G$ is not the Tur\'an graph then we can find a vertex $v$ such that $G\setminus v$  has at least $2^{\floor{(n-1)^2/4}+m+1}$ $C_{2k+1}$-free orientations. We then iterate (note that no subgraph we consider can any longer be a Tur\'an graph since it has too many orientations, so also edges) as long as our graph has at least $n_0$ vertices. When we stop we obtain a graph with less than $n_0$ vertices which has at least $2^{n_0^2}$ orientations which is impossible since it has at most $n_0^2/2$ edges. Let us assume $G$ is not the Tur\'an graph on $n \ge n_0$ vertices and proceed to find such a vertex $v$.
    
    Every vertex needs to have degree at least $\floor{n/2}$ as otherwise its edges contribute at most a factor of $2^{\floor{n/2}-1}$ to the number of orientations so it would work as our vertex $v$ above.
    
    Let $V_1,V_2$ make a bipartition of $V(G)$ which minimises the number of edges within parts. 
    Since $n \ge n_0$ by \Cref{lem:stability} we have few (in particular at most $\delta n^2$) edges within parts. This implies $|V_1|,|V_2| \le (1/2+\delta^{1/2})n$ as otherwise there would be less than $n^2/4$ edges, so too few orientations. Similarly, there can be at most $\delta n^2$ edges missing between parts.
    
    We first claim that there can be only few orientations for which there exists a pair of subsets  $X_1\subseteq V_1, X_2\subseteq V_2$, both of size at least $2\delta n$, which have at most 1/10 proportion of edges directed from $X_1$ to $X_2$. The number of such orientations of edges between $X_1,X_2$ is at most $\binom{e(X_1,X_2)}{\le e(X_1,X_2)/10}\le 2^{0.5e(X_1,X_2)}$. Since the total number of edges is at most $n^2/4+\delta n^2$ there are at most $2^{n^2/4+\delta n^2-0.5e(X_1,X_2)}$ such orientations of the whole graph. Since we are missing at most $\delta n^2$ edges between $V_1,V_2$ we have $e(X_1,X_2)\ge |X_1||X_2|-\delta n^2 \ge 3 \delta n^2$ the number of such orientations is at most $2^{n^2/4-0.5\delta n^2}$. Since we can choose possible locations of $X_1$ and $X_2$ in at most $2^{2n}$ many ways there can be at most $2^{2n}\cdot 2^{n^2/4-0.5\delta n^2}\le 2^{\floor{n^2/4}}/2$ orientations for which such a pair $X_1,X_2$ exists.
        
    Let us now consider only $C_{2k+1}$-free orientations for which any pair of subsets $X_1,X_2$ of size at least $2\delta n$ have at least $1/10$ proportion of edges oriented in both ways. In particular, any pair of subsets both of size at least $40\delta n$ is $k$-regular. We call such an orientation relevant and by above counting there are at least $2^{\floor{n^2/4}+m}-2^{\floor{n^2/4}}/2\ge 2^{\floor{n^2/4}+m-1}$ relevant orientations.
    
    \textbf{Case 1.} Some vertex $v$ has at least $800\delta n$ neighbours in its own part, say $V_1$.
    
    Note that $v$ must have at least $800 \delta n$ neighbours in $V_2$ as well (by us taking the max-cut). If in a relevant orientation $v$ has $40\delta n$ out-neighbours and $40\delta n$ in-neighbours belonging to different parts then since these sets make a $k$-regular pair we can find a path of length $2k-1$ and join it with $v$ to find a $C_{2k+1}$, a contradiction. This implies that $v$ must have at most $80\delta n$ in neighbours or at most $80\delta n$ out-neighbours. In particular, its edges may be oriented in such a way in at most $2\cdot \binom{d(v)}{\le 80 \delta n}\le 2\binom{d(v)}{\le d(v)/10} \le 2^{0.49d(v)}$ many ways.
    
    In other words $G \setminus \{v\}$ must have at least $2^{\floor{n^2/4}+m-1-0.49d(v)}\ge 2^{\floor{(n-1)^2/4}+m+1}$ $C_{2k+1}$-free orientations (since $d(v) \le n$ and $n$ is large) as desired.
    
    \textbf{Case 2.} Every vertex of $G$ has at most $800\delta n$ neighbours in its own part.
    
    Since $G$ is not the Tur\'an graph there must exist an edge $uv$ inside a part. Both $u$ and $v$ have at least $\floor{n/2}-800\delta n \ge n/3$ neighbours in the other part, in particular they have $d(u,v) \ge n/8$ common neighbours since parts have size at most $n/2+\delta^{1/2}n$. If in a relevant orientation $uv$ is an edge then there can be at most $40 \delta n$ out-neighbours of $v$ which are also in-neighbours of $u$ in the other part, as otherwise \Cref{lem:embedding} provides us with $C_{2k+1}$. This will severely reduce the number of possible orientations of edges incident to $u,v.$ In particular, the edges from $u$ and $v$ to their common neighbours can be oriented in at most $\binom{d(u,v)}{40 \delta n} \cdot 4^{40\delta n} \cdot 3^{d(u,v)-40\delta n}\le 4^{0.99d(u,v)}.$ The same bound analogously holds if $vu$ is the edge instead. In particular, there are at most $2^{d(u)+d(v)-0.02d(u,v)}\le 2^{n-n/1000}$ possible orientations of edges incident to $u$ and $v$ (we are using that both $u$ and $v$ have degree at most $n/2+ \delta^{1/2}n+800\delta n$). In particular, the total number of orientations of $G \setminus \{u,v\}$ is going to be at least $2^{\floor{(n-2)^2/4}+m+2}$ so we made two steps of our argument at once and are done.
\end{proof}
    
\providecommand{\bysame}{\leavevmode\hbox to3em{\hrulefill}\thinspace}
\providecommand{\MR}{\relax\ifhmode\unskip\space\fi MR }
\providecommand{\MRhref}[2]{%
  \href{http://www.ams.org/mathscinet-getitem?mr=#1}{#2}
}
\providecommand{\href}[2]{#2}


\begin{thebibliography}{1}

\bibitem{ABKS}
N.~Alon, J.~Balogh, P.~Keevash, and B.~Sudakov, \emph{The number of edge
  colorings with no monochromatic cliques}, J. London Math. Soc. (2)
  \textbf{70} (2004), no.~2, 273--288.

\bibitem{alon-yuster}
N.~Alon and R.~Yuster, \emph{The number of orientations having no fixed
  tournament}, Combinatorica \textbf{26} (2006), no.~1, 1--16.

\bibitem{mota}
P~Araujo, F~Botler, and G.H. Mota, \emph{Counting graph orientations with no
  directed triangles}, preprint arXiv:2005.13091 (2019).

\bibitem{E-R}
P.~Erd\H{o}s, \emph{Some new applications of probability methods to
  combinatorial analysis and graph theory}, Proceedings of the {F}ifth
  {S}outheastern {C}onference on {C}ombinatorics, {G}raph {T}heory and
  {C}omputing ({F}lorida {A}tlantic {U}niv., {B}oca {R}aton, {F}la., 1974),
  1974, pp.~39--51. Congressus Numerantium, No. X.

\bibitem{simonovits-stability}
M.~Simonovits, \emph{A method for solving extremal problems in graph theory,
  stability problems}, Theory of {G}raphs ({P}roc. {C}olloq., {T}ihany, 1966),
  Academic Press, New York, 1968, pp.~279--319.

\bibitem{yuster}
R.~Yuster, \emph{The number of edge colorings with no monochromatic triangle},
  J. Graph Theory \textbf{21} (1996), no.~4, 441--452.

\end{thebibliography}
\end{document}